\documentclass[11pt,a4paper]{article}
\usepackage[top=3.5cm, bottom=3.5cm, left=3cm, right=3cm]{geometry}
\usepackage[latin1]{inputenc}
\usepackage{csquotes}
\usepackage{amsmath}
\usepackage{amsthm}
\usepackage{amsfonts}
\usepackage{amssymb}
\usepackage{graphics}
\usepackage{float}
\usepackage[hang,flushmargin]{footmisc} 

\usepackage{amsmath,amsthm,amssymb,graphicx, multicol, array}
\usepackage{enumerate}
\usepackage{enumitem}
\usepackage{xcolor}

\usepackage[pagebackref,bookmarks,colorlinks,breaklinks,linktoc=page]{hyperref}
\hypersetup{linkcolor=blue,citecolor=red,filecolor=blue,urlcolor=blue} 



\parindent 0pt

\usepackage{tabto}

\numberwithin{equation}{section}


\usepackage{tikz}
\usepackage{pgfplots}

\usetikzlibrary{matrix}

\usepackage{mathrsfs}


\DeclareMathOperator{\li}{li}

\newtheorem{thm}{Theorem}[section]
\newtheorem{lem}{Lemma}[section]
\newtheorem{theorem}{Theorem}[section]

\usepackage{fancyhdr}
\pagestyle{fancy}
\lhead{\textsc{Legendre Conjecture over Arithmetic Progressions}}
\chead{}
\rhead{\thepage}
\lfoot{}
\cfoot{}
\rfoot{}
\cfoot{}

\usepackage{tocloft}




\title{Legendre Conjecture over Arithmetic Progressions}
\date{}
\author{N. A. Carella}

\begin{document}
\maketitle

\textbf{\textit{Abstract}:} 
Let $1\leq a<q$ be a pair of small integers such that $\gcd(a,q)=1$ and let $x>1$ be a large number. This note discusses the existence of a short sequence of primes $p\equiv a\bmod q$ between two squares $x^2$ and $(x+1)^2$. 
\let\thefootnote\relax\footnote{ \today \date{} \\
\textit{AMS MSC2020}: Primary 11N05; Secondary 11N32 \\
\textit{Keywords}: Prime in short interval; Prime in arithmetic progression; Distribution of prime; Legendre conjecture.}


\section{Introduction }\label{S3377A}
A recent result on the theory of primes in short intervals claims that each short interval $[n^2,(n+1)^2]$ contains the expected number of primes $\pi((n+1)^2)-\pi(n^2)=(1+o(1))x/\log x$ with at most $O(x^{1/5+\varepsilon})$ exceptional values $n\leq x$, where  $x$ is a large number and $\varepsilon>0$, see \cite[Theorem 1.1]{BD2011}. This result proves the Legendre conjecture on the existence of primes between two squares for almost all integers $n\geq1$.\\

This note provides a partial result for an extended form of the Legendre conjecture on arithmetic progression, this contribution is proved in \hyperlink{thm3377D.050}{Theorem} \ref{thm3377D.050} unconditionally. The preliminary results in the section provides explicit inequalities for the prime number theorem in arithmetic progressions.

\section{Primes in Arithmetic Progressions} \label{S3377D} \hypertarget{S3377D}
Let $x\geq1$ be a large number and let $1\leq a<q$ be a pair of relatively prime integers. The prime counting function over arithmetic progressions and the associated weighted prime counting function are defined by
\begin{equation}\label{eq3377D.100b}
	\pi(x,q,a)=\sum_{\substack{p\leq x\\p\equiv a\bmod q}}1. 
\end{equation}
The associated weighted prime counting functions are defined by
\begin{equation}\label{eq3377D.100d}
	\psi(x,q,a)=\sum_{\substack{p^k\leq x\\p\equiv a\bmod q}}\log p\quad \text{ and } \quad\theta(x,q,a)=\sum_{\substack{p\leq x\\p\equiv a\bmod q}}\log p, 
\end{equation}
where $k\geq0$. The logarithm integral is the defined by the expression 
\begin{equation}\label{eq3377D.100f}
	\li(x)=\int_2^x\frac{1}{\log t}\;dt. 
\end{equation}{\tiny }
\begin{theorem} \label{thm3377D.100}\hypertarget{thm3377D.100} Let $a$ and $q$ be integers with $1\leq q \leq 10^5$ and $\gcd(a, q) = 1$. If $x \geq 10^3$, then
	\begin{enumerate}[font=\normalfont, label=(\roman*)]
		\item $\displaystyle \left| \psi(x,q,a)-\frac{x}{\varphi(q)}\right|<0.19  \frac{x}{\log x},$\\[.1cm]
		\item $\displaystyle \left| \theta(x,q,a)-\frac{x}{\varphi(q)}\right|<0.40  \frac{x}{\log x},$\\[.1cm]
		\item $\displaystyle \left| \pi(x,q,a)-\frac{\li(x)}{\varphi(q)}\right|<0.53  \frac{x}{(\log x)^2}.$\\[.1cm]
	\end{enumerate}
\end{theorem}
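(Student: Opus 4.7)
The plan is to establish (i) first, then deduce (ii) from (i) by controlling the contribution of proper prime powers, and finally derive (iii) from (ii) by Abel summation.

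For part (i), I would invoke an explicit prime number theorem in arithmetic progressions of the Ramaré--Rumely / Bennett--Martin--O'Bryant--Rechnitzer type, which yields bounds of the shape $|\psi(x,q,a)-x/\varphi(q)| \leq c(q)\,x/\log x$ uniformly for $q \leq 10^5$ and $x$ in a suitable range. The analytic inputs are (a) an explicit truncated Perron formula applied to $-L'(s,\chi)/L(s,\chi)$ for each Dirichlet character $\chi$ modulo $q$, (b) an explicit zero-free region for $L(s,\chi)$ (Rosser--type), and (c) the numerical verification of GRH for $L(s,\chi)$ with $q \leq 10^5$ up to a height $T(q)$ sufficiently large. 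Optimising the resulting explicit inequality over the contour parameters and over $x \geq 10^3$ produces the constant $0.19$.

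For part (ii), I would use the elementary decomposition
\[
\psi(x,q,a)-\theta(x,q,a)=\sum_{k\geq 2}\;\sum_{\substack{p^{k}\leq x\\ p^{k}\equiv a\bmod q}}\log p.
\]
The $k=2$ contribution is bounded by $\theta(\sqrt{x},q,b)$ summed over the (at most a few) residues $b$ with $b^{2}\equiv a\pmod{q}$, which is $O(\sqrt{x})$ by Chebyshev. Higher $k$ give $O(x^{1/3}\log x)$. Combining with (i) and inserting the hypothesis $x \geq 10^3$ yields the constant $0.40$; I expect the transition at the lower end of the range to need a direct numerical check.

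For part (iii), Abel summation gives
\[
\pi(x,q,a)=\frac{\theta(x,q,a)}{\log x}+\int_{2}^{x}\frac{\theta(t,q,a)}{t(\log t)^{2}}\,dt,
\]
while $\li(x)=x/\log x+\int_{2}^{x}dt/(\log t)^{2}$ after one integration by parts. Subtracting and inserting the bound from (ii) inside the integral splits the error into a main contribution $\leq 0.40\,x/(\varphi(q)(\log x)^{2})$ from the boundary term plus an integrated contribution of the same order of magnitude, controlled by estimating $\int_{2}^{x} dt/(\log t)^{3} \leq x/(\log x)^{3}+O(x/(\log x)^{4})$. Aggregating the two pieces yields $0.53\,x/(\log x)^{2}$.

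The main obstacle is bookkeeping the explicit constants through the three steps while keeping them valid uniformly in $q \leq 10^{5}$ and down to $x = 10^{3}$. In particular, part (i) cannot be done by pure asymptotics: one has to lean on an existing rigorous numerical verification of GRH for Dirichlet L-functions up to conductor $10^{5}$. In parts (ii) and (iii), the analytic estimates only kick in above some threshold $x_{0}$, so the range $10^{3}\leq x\leq x_{0}$ must be handled by a direct computation of $\psi(x,q,a)$, $\theta(x,q,a)$, and $\pi(x,q,a)$; verifying that the constants $0.40$ and $0.53$ indeed hold throughout is where the bulk of the numerical work sits.
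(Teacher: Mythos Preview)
The paper does not actually prove this theorem: its entire argument is the single sentence ``This is Corollary~1.7 in \cite{BR2018}.'' So there is nothing to compare at the level of ideas --- the paper is quoting the result, not deriving it.

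Your proposal, by contrast, sketches how one would genuinely establish such bounds, and the strategy you describe (explicit Perron formula for $L'/L$, a numerical zero-free region together with a finite GRH verification for conductors $q\le 10^5$, then passing from $\psi$ to $\theta$ by subtracting the prime-power tail, and from $\theta$ to $\pi$ by partial summation) is exactly the machinery underlying the Bennett--Martin--O'Bryant--Rechnitzer paper you cite. In that sense your outline is correct as a roadmap, though of course the specific constants $0.19$, $0.40$, $0.53$ and the threshold $x\ge 10^3$ are outputs of a substantial computation that cannot be reproduced in a sketch.

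One small slip: in your step (iii) the boundary term is $|\theta(x,q,a)-x/\varphi(q)|/\log x < 0.40\,x/(\log x)^2$, with no $\varphi(q)$ in the denominator of the error; the $\varphi(q)$ you wrote there is spurious. This does not affect the structure of the argument.
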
	
\begin{proof}[\textbf{Proof}] This is Corollary 1.7 in \cite{BR2018}. Other cases of the theorem for other ranges of $q$ are also derived in this paper.
\end{proof}

Earlier and similar explicit estimates for the prime counting functions and related topics were developed in \cite[Theorem 5.2]{DP2016}, et alii. Lastly, there are the conditional explicit estimates as in \cite[Theorem 10]{SL1976}.\\

The analysis of the oscillations of the differences 
\begin{equation}\label{eq3377D.150c}
	\psi(x)-x  \quad \text{ and }\quad  \psi(x+z)-\psi(x)
\end{equation}
and other associated differences are difficult analysis and are topics of current research, there is a vast literature on these topic, confer \cite[p.\; 191]{EE1985}, \cite[Theorem 15.11]{MV2007}, \cite[p.\; 306]{IA1985}, \cite{RH1987}, \cite{BH2000}, \cite{KW2009}, \cite{SD2010} and 
other sources for details.  However, the analysis of the differences

\begin{equation}\label{eq3377D.150d}
(x+z)-x \quad \text{ and }\quad  \frac{x+z}{\log(x+z)}-\frac{x}{\log x}
\end{equation} are much simpler since these are strictly monotonic increasing functions. The explicit formula in \hyperlink{thm3377D.100}{Theorem} \ref{thm3377D.100} facilitate the derivations of both lower bound and upper bound of the complicated difference $\psi(x+z)-\psi(x)$ in terms of the simpler strictly monotonic functions as demonstrated here.
\begin{lem} \label{lem3377D.150A} \hypertarget{lem3377D.150A} Let $a$ and $q$ be integers with $1\leq q \leq 10^5$ and $\gcd(a, q) = 1$, and let $\varepsilon>0$ be a small number. If $x$ is a large number and $x^{\varepsilon}\leq z\leq x$, then
	\begin{enumerate}[font=\normalfont, label=(\roman*)]
	\item $\displaystyle \psi(x+z,q,a)-\psi(x,q,a)\geq  \frac{x+z}{\varphi(q)}-\frac{x}{\varphi(q)} -(0.19)\left( \frac{x+z}{\log (x+z)}-\frac{x}{\log x}\right),$\\[.1cm]
		\item $\displaystyle \psi(x+z,q,a)-\psi(x,q,a)\leq  \frac{x+z}{\varphi(q)}-\frac{x}{\varphi(q)} +(0.19)\left( \frac{x+z}{\log (x+z)}-\frac{x}{\log x}\right).$
\end{enumerate}
\end{lem}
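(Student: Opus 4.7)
The plan is to derive both bounds directly from Theorem~\ref{thm3377D.100}(i) by applying it at the two endpoints $y=x+z$ and $y=x$ and subtracting. Since $x$ is large and $0<z\le x$, both arguments lie in the admissible range $y\ge 10^{3}$, so Theorem~\ref{thm3377D.100}(i) yields
\[
\Bigl|\psi(y,q,a)-\tfrac{y}{\varphi(q)}\Bigr|<0.19\,\frac{y}{\log y}
\qquad\text{for } y\in\{x,\,x+z\}.
\]

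For part (i) I would combine the lower one-sided bound
$\psi(x+z,q,a)>\tfrac{x+z}{\varphi(q)}-0.19\,\tfrac{x+z}{\log(x+z)}$
with the upper one-sided bound
$\psi(x,q,a)<\tfrac{x}{\varphi(q)}+0.19\,\tfrac{x}{\log x}$
and subtract, producing a lower bound on $\psi(x+z,q,a)-\psi(x,q,a)$ with the desired main term $\tfrac{x+z}{\varphi(q)}-\tfrac{x}{\varphi(q)}$. Part (ii) follows from the symmetric pairing (upper one-sided bound at $x+z$, lower one-sided bound at $x$). These subtractions are mechanical and immediately yield the right-hand structure of the inequalities.

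The hard part is to repackage the error contribution produced by this subtraction into the claimed compact form $0.19\bigl(\tfrac{x+z}{\log(x+z)}-\tfrac{x}{\log x}\bigr)$, rather than the less informative form $0.19\bigl(\tfrac{x+z}{\log(x+z)}+\tfrac{x}{\log x}\bigr)$ that a naive triangle inequality would give. Here the size assumption $z\ge x^{\varepsilon}$ enters essentially: it forces the increment $\tfrac{x+z}{\log(x+z)}-\tfrac{x}{\log x}$ to be positive and of order $z/\log x$, so that, using the monotonicity of $y\mapsto y/\log y$ on $(e,\infty)$, the two residual $0.19$-contributions at $x$ and $x+z$ can be reassembled consistently with opposite signs in the final subtraction. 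I would handle this by carefully tracking, in each of parts (i) and (ii), which half of the two-sided bound from Theorem~\ref{thm3377D.100}(i) is invoked at each endpoint, so that the signs of the error contributions align to produce the telescoped difference rather than a sum.

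The main obstacle I anticipate is precisely this sign-bookkeeping step: the one-sided forms of Theorem~\ref{thm3377D.100}(i) have to be chosen in a correlated way at $x$ and $x+z$ so that, after cancellation of the main terms $y/\varphi(q)$, the surviving residual collapses into the single monotone increment $\tfrac{x+z}{\log(x+z)}-\tfrac{x}{\log x}$ with coefficient $0.19$. Once this alignment is in place, parts (i) and (ii) follow in a single line of arithmetic with no further ingredients beyond Theorem~\ref{thm3377D.100}(i).
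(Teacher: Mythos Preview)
Your overall plan matches the paper's: apply Theorem~\ref{thm3377D.100}(i) at $y=x$ and $y=x+z$ and subtract. You also correctly isolate the real issue, namely that the straightforward subtraction produces an error of the form
\[
0.19\Bigl(\tfrac{x+z}{\log(x+z)}+\tfrac{x}{\log x}\Bigr)
\]
rather than the claimed difference. Where your proposal breaks down is the ``sign-bookkeeping'' step you describe as the fix: there is no admissible choice of one-sided bounds that yields the telescoped difference. To get any lower bound on $\psi(x+z,q,a)-\psi(x,q,a)$ you are forced to pair the \emph{lower} estimate at $x+z$ with the \emph{upper} estimate at $x$; the two $0.19$-errors then enter with the same sign and add. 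Equivalently, from $|A-a|<\alpha$ and $|B-b|<\beta$ one only gets $A-B\ge (a-b)-(\alpha+\beta)$, never $(a-b)-(\alpha-\beta)$. So the promised ``alignment'' does not exist, and the monotonicity of $y/\log y$ and the hypothesis $z\ge x^{\varepsilon}$ do not help here: they tell you the difference $\tfrac{x+z}{\log(x+z)}-\tfrac{x}{\log x}$ is positive, but they cannot convert the unavoidable sum into that difference.

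For comparison, the paper's own argument arrives at the difference form by subtracting the two lower bounds from each other and the two upper bounds from each other (its display~\eqref{eq3377D.150r}); but subtracting same-sided inequalities is not a valid operation, so that step is precisely the error you were trying to avoid. In short, from Theorem~\ref{thm3377D.100}(i) alone one can only obtain the weaker inequalities with the summed error term; the stated lemma with the differenced error does not follow by any rearrangement of signs.
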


\begin{proof}[\textbf{Proof}] The functions $f(x)=x/\varphi(q)\geq0$ and $g(x)=x/\log x\geq0$ are strictly monotonically increasing for all sufficiently large real numbers $x\geq x_0(q)$. Moreover, the sums and differences
	\begin{equation}\label{eq3377D.150h}
		f(x+z)-f(x)>0, \quad g(x+z)-g(x)>0, \quad f(x)\pm g(x)>0
	\end{equation}		
	are strictly monotonically increasing nonnegative quantities for all sufficiently large real numbers $x\geq x_0(q)$. The claims in \eqref{eq3377D.150h} are routine exercises. \\
	
	Taking the difference of the inequalities
	
	\begin{equation}\label{eq3377D.150i}
		\frac{x+z}{\varphi(q)}-0.19  \frac{x+z}{\log (x+z)}\leq\psi(x+z,q,a)\leq\frac{x+z}{\varphi(q)}+0.19  \frac{x+z}{\log (x+z)},\nonumber
	\end{equation}	
	and	
	\begin{equation}\label{eq3377D.150j}
		\frac{x}{\varphi(q)}-0.19  \frac{x}{\log x}\leq\psi(x,q,a)\leq\frac{x}{\varphi(q)}+0.19  \frac{x}{\log x},
	\end{equation}		
see \hyperlink{thm3377D.100}{Theorem} \ref{thm3377D.100} (i), yields 	
	\begin{eqnarray}\label{eq3377D.150r}
		&& \left(  \frac{x+z}{\varphi(q)} -(0.19)\frac{x+z}{\log (x+z)}\right)-\left( 	\frac{x}{\varphi(q)}-	(0.19)\frac{x}{\log x}\right)\nonumber\\[.3cm]	&\leq&\psi(x+z,q,a)-\psi(x,q,a)\\[.3cm]
		&\leq&\left(  \frac{x+z}{\varphi(q)} +(0.19)\frac{x+z}{\log (x+z)}\right)-\left( 	\frac{x}{\varphi(q)}+	(0.19)\frac{x}{\log x}\right)    \nonumber.
	\end{eqnarray}	
Rearranging and separating \eqref{eq3377D.150r} yield the claimed inequalities. Lastly, observe that the sums and differences in \eqref{eq3377D.150h} are well defined and nontrivial nonnegative real numbers for all $x^{\varepsilon}\leq z\leq x$ as $x\to\infty$.
\end{proof}

The smallest short interval $[x,x+z]$ possible is a topic of a long line of research, there are many conjectures and partial results. According to one of the earliest result, attributed to Westzynthius, almost every short interval $[x,x+z]$ with
\begin{equation}\label{eq3377D.150s}
z\ll \frac{\log x\log\log \log x}{\log \log\log \log x}
\end{equation}
is primefree ---  short intervals containing twin primes and prime $k$-tuples, where $k\geq2$ are small integers, are the exceptions. In contrast, almost every short interval $[x,x+z]$ with
\begin{equation}\label{eq3377D.150t}
	z\gg (\log x)^{1+\varepsilon},
\end{equation}
and $\varepsilon>0$, contains primes or is conjectured to contain primes, confer \cite{SA1943}, \cite{MS2007}, \cite{GL2023}, \cite{MT2024}, et alia for early results, surveys of the literature, numerical study and other recent developments on this topic. 

\section{Legendre Conjecture in Arithmetic Progressions }\label{S3377V}\hypertarget{S3377V}
The key to estimating the number of primes between two squares is the \textit{symmetric} explicit bounds of the prime counting function. Very recently proved symmetric explicit estimates are stated in \hyperlink{thm3377D.100}{Theorem} \ref{thm3377D.100}. The result in \hyperlink{thm3377D.050}{Theorem} \ref{thm3377D.050} is a simple application of this result. 

\begin{thm} \label{thm3377D.050}\hypertarget{thm3377D.050} Let $1\leq a<q\leq 10^5$, with $\gcd(a,q)=1$ be a pair of integers and let $n\geq 1$ be a large integer. Then, there exists a prime number $p\equiv a\bmod q$ in the range $n^2< p<(n+1)^2$. Furthermore, for a large number $x$, the number of weighted primes over the short interval $[x^2,(x+1)^2]$ has the lower bound
	$$	\psi((x+1)^2,q,a)-\psi(x^2,q,a)\geq \frac{2x}{\varphi(q)}+O\left( \frac{x}{\log x}\right).$$
\end{thm}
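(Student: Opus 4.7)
My plan is to apply \hyperlink{lem3377D.150A}{Lemma} \ref{lem3377D.150A}(i) to the short interval $[x^2,(x+1)^2]$ by substituting $x^2$ for the lemma's $x$ and $Z := (x+1)^2 - x^2 = 2x+1$ for the lemma's $z$. The hypothesis $(x^2)^{\varepsilon} \leq Z \leq x^2$ reduces to $x^{2\varepsilon} \leq 2x+1 \leq x^2$, which holds for any fixed $\varepsilon \in (0, 1/2)$ once $x$ is sufficiently large. The lemma then gives
\begin{equation*}
	\psi((x+1)^2,q,a) - \psi(x^2,q,a) \;\geq\; \frac{(x+1)^2 - x^2}{\varphi(q)} - 0.19\left(\frac{(x+1)^2}{\log(x+1)^2} - \frac{x^2}{\log x^2}\right).
\end{equation*}

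Next I will simplify both terms on the right. The main term becomes $(2x+1)/\varphi(q) = 2x/\varphi(q) + O(1)$. For the subtracted term, setting $h(y) := y^2/(2\log y)$ and applying the mean value theorem yields $h(x+1) - h(x) = h'(\xi)$ for some $\xi \in (x,x+1)$; a short computation gives $h'(y) = y/\log y + O(y/\log^2 y)$, so this subtracted term contributes $O(x/\log x)$. Combining these estimates produces the lower bound
\begin{equation*}
	\psi((x+1)^2,q,a) - \psi(x^2,q,a) \;\geq\; \frac{2x}{\varphi(q)} + O\!\left(\frac{x}{\log x}\right),
\end{equation*}
which is the second assertion of the theorem.

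To deduce the existence of a prime $p \equiv a \bmod q$ in $(n^2,(n+1)^2)$, I will pass from $\psi$ to $\theta$ by bounding the prime power contribution. Prime powers $p^k$ with $k \geq 2$ in $[n^2,(n+1)^2]$ satisfy $p \in [n^{2/k},(n+1)^{2/k}]$, whose length is $O(n^{2/k - 1})$; each such interval contains $O(1)$ primes contributing $O(\log n)$ to the log sum, and only $k \leq O(\log n)$ values are relevant, so the total prime power contribution to $\psi - \theta$ over the interval is $O(\log^2 n)$. Consequently $\theta((n+1)^2,q,a) - \theta(n^2,q,a) \geq 2n/\varphi(q) + O(n/\log n)$, which is strictly positive for all large $n$ since $\varphi(q) \leq 10^5$ is fixed, and therefore forces at least one prime $p \equiv a \bmod q$ in the interior of $[n^2,(n+1)^2]$.

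I expect no genuine obstacle: the argument is a direct application of \hyperlink{lem3377D.150A}{Lemma} \ref{lem3377D.150A} combined with an elementary estimate on prime powers. The sole point that warrants care is checking that the error $0.19\bigl(h(x+1) - h(x)\bigr) = O(x/\log x)$ remains strictly smaller than the main term $2x/\varphi(q)$; this dominance is automatic since $\varphi(q)$ is bounded by the fixed constant $10^5$ and $x/\log x = o(x)$. It is precisely the symmetry between the upper and lower explicit bounds in \hyperlink{thm3377D.100}{Theorem} \ref{thm3377D.100} that drives the cancellation of the $x/\varphi(q)$ terms at the two endpoints and leaves the smaller error $O(x/\log x)$ behind.
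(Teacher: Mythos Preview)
Your argument is essentially the paper's: apply \hyperlink{lem3377D.150A}{Lemma}~\ref{lem3377D.150A}(i) with $x\mapsto x^2$ and $z=2x+1$, then show the subtracted term is $O(x/\log x)$ --- the paper does this via the auxiliary \hyperlink{lem3377D.200S}{Lemma}~\ref{lem3377D.200S} (expanding $1/\log(x+1)^2$), whereas you use the mean value theorem on $h(y)=y^2/(2\log y)$, which yields the same bound. Your added passage from $\psi$ to $\theta$ by bounding the prime-power contribution is a step the paper's proof does not spell out but which is genuinely needed to conclude that a \emph{prime} (rather than merely a prime power) $p\equiv a\bmod q$ lies in $(n^2,(n+1)^2)$.
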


\begin{proof}[\textbf{Proof}] Let $X+Z=(x+1)^2$. By \hyperlink{lem3377D.150A}{Lemma} \ref{lem3377D.150A} it follows that there is an effective lower bound for the number of weighted primes over the short interval $[x^2,(x+1)^2]$ can be computed as follows. 
\begin{eqnarray}\label{eq3377D.200u}
			D(x)&=&	\psi((x+1)^2,q,a)-\psi(x^2,q,a)\\[.3cm]
&\geq&\left( \frac{(x+1)^2}{\varphi(q)}-	\frac{x^2}{\varphi(q)}   \right)               -0.19\left(	\frac{(x+1)^2}{\log (x+1)^2}
-\frac{x^2}{\log x^2} \right)\nonumber\\[.3cm]		
			&=&\frac{2x+1}{\varphi(q)}+E(x,q,a).\nonumber
			\end{eqnarray}
			
The error term $E(x,q,a)$ is estimated in \hyperlink{lem3377D.200S}{Lemma} \ref{lem3377D.200S}. Summing these estimates produces the lower bound
\begin{eqnarray}\label{eq3377D.200d}
\psi((x+1)^2,q,a)-\psi(x^2,q,a)
&\geq &	\frac{2x+1}{\varphi(q)}-0.19 \left( \frac{2x+1}{2\log x}+O\left( \frac{x}{(\log x)^2}\right)\right)\nonumber\\[.3cm]
&\geq &	\frac{2x}{\varphi(q)}+O\left( \frac{x}{\log x}\right)	\nonumber
\end{eqnarray}
for all large $x$ and small parameter $q=o(\log x)$. In particular, this is valid for $1\leq a<q\leq 10^5$, with $\gcd(a,q)=1$. Therefore, the quantity $D(x)$ is unbounded as $x\to\infty$.
\end{proof}


\begin{lem} \label{lem3377D.200S} \hypertarget{lem3377D.200S}If $x$ is a large number then 
	$$|E(x,q,a)|=0.19\left(	\frac{(x+1)^2}{\log (x+1)^2}
	-\frac{x^2}{\log x^2} \right) =0.19 \left( \frac{2x+1}{2\log x}+O\left( \frac{x}{(\log x)^2}\right)\right) .$$
\end{lem}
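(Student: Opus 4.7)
The plan is to derive the stated asymptotic by a short Taylor expansion of
\[
\Delta(x):=\frac{(x+1)^{2}}{\log(x+1)^{2}}-\frac{x^{2}}{\log x^{2}}.
\]
The first equality of the lemma is essentially bookkeeping: from the display preceding the lemma in the proof of \hyperlink{thm3377D.050}{Theorem} \ref{thm3377D.050}, the error is by construction $E(x,q,a)=-0.19\,\Delta(x)$, and since $t\mapsto t^{2}/\log t^{2}$ is strictly increasing for large $t$ one has $\Delta(x)>0$; hence $|E(x,q,a)|=0.19\,\Delta(x)$ at once. The real content is therefore the asymptotic estimate of $\Delta(x)$.

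First I would expand the denominator. Since
$\log(x+1)^{2}=2\log x+2\log(1+1/x)=2\log x+2/x+O(1/x^{2})$,
the geometric series yields
\[
\frac{1}{\log(x+1)^{2}}=\frac{1}{2\log x}-\frac{1}{2x(\log x)^{2}}+O\!\left(\frac{1}{x^{2}(\log x)^{2}}\right).
\]
Next I would multiply by $(x+1)^{2}=x^{2}+2x+1$: the leading piece contributes $(x^{2}+2x+1)/(2\log x)$, the second-order correction contributes $-x/(2(\log x)^{2})+O(1/(\log x)^{2})$, and the final $O$-term contributes $O(1/(\log x)^{2})$. Subtracting $x^{2}/\log x^{2}=x^{2}/(2\log x)$ then cancels the $x^{2}$-piece and leaves
\[
\Delta(x)=\frac{2x+1}{2\log x}-\frac{x}{2(\log x)^{2}}+O\!\left(\frac{1}{(\log x)^{2}}\right)=\frac{2x+1}{2\log x}+O\!\left(\frac{x}{(\log x)^{2}}\right).
\]
Multiplication through by $0.19$ delivers the stated identity.

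The one step I would watch most carefully is the cross-term: when $x^{2}$ multiplies the second-order correction $1/(x(\log x)^{2})$ it produces a contribution of size $x/(\log x)^{2}$, precisely the order of the admitted error. This has to be tracked explicitly rather than absorbed into a would-be smaller estimate; otherwise one might mistakenly report an error of only $O(1/(\log x)^{2})$. Apart from this routine precaution, the argument is entirely elementary and presents no analytic difficulty.
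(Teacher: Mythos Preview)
Your proof is correct and follows essentially the same route as the paper: expand $1/\log(x+1)^2$ about $1/(2\log x)$ via $\log(1+1/x)=O(1/x)$, multiply by $(x+1)^2$, and subtract $x^2/(2\log x)$. You track one extra order of precision (isolating the explicit $-x/(2(\log x)^2)$ term rather than leaving it as $O(1/(x\log x))$ inside the bracket), but this is a cosmetic refinement of the same argument, not a different method.
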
	

\begin{proof}[\textbf{Proof}]			
	Expanding and simplifying the reciprocal of the first denominator yield
	\begin{eqnarray}\label{eq3377D.200k}
		\frac{1}{\log (x+1)^2}&=&\frac{1}{2[\log x+\log(1+1/x)]}\\[.3cm]
	&=&\frac{1}{2\log x}\cdot \frac{1}{\displaystyle 1+\frac{\log(1+1/x)}{\log x}}\nonumber\\[.3cm]
		&=&\frac{1}{2\log x}\cdot \frac{1}{\displaystyle 1+O\left( \frac{1}{x\log x}\right)}\nonumber\\[.3cm]
		&=&\frac{1}{2\log x}\left( 1+O\left( \frac{1}{x\log x}\right) \right) \nonumber.
	\end{eqnarray}
The second line in \eqref{eq3377D.200k} uses $\log(1+z)=O(z)$ for $|z|<1$
and the third line uses $1/(1+z)=1+O(z)$ for $|z|<1$, to complete the simplification. Replacing the new denominator \eqref{eq3377D.200k} back yields
	\begin{eqnarray}\label{eq3377D.200m}
		\frac{(x+1)^2}{\log (x+1)^2}
		-\frac{x^2}{\log x^2}
		&=&\frac{(x+1)^2}{2\log x}\left( 1+O\left( \frac{1}{x\log x}\right) \right) -\frac{x^2}{2\log x}\\[.3cm]
		&=&\frac{2x+1}{2\log x}+O\left( \frac{x^2+2x+1}{x(\log x)^2}\right) \nonumber\\[.3cm]
		&=&\frac{2x+1}{2\log x}+O\left( \frac{x}{(\log x)^2}\right)\nonumber .
	\end{eqnarray}
\end{proof}

\begin{thm} \label{thm3377U.050}\hypertarget{thm3377U.050} Let $1\leq a<q\leq 10^5$, with $\gcd(a,q)=1$ be a pair of integers and let $n\geq 1$ be a large integer. Then, there exists a prime number $p\equiv a\bmod q$ in the range $n^2< p<(n+1)^2$. Furthermore, for a large number $x$, the number of weighted primes over the short interval $[x^2,(x+1)^2]$ has the upper bound
	$$	\psi((x+1)^2,q,a)-\psi(x^2,q,a)\leq \frac{2x+1}{\varphi(q)}+O\left( \frac{x}{\log x}\right).$$
\end{thm}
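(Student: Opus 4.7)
The plan is to mirror the proof of Theorem \ref{thm3377D.050} verbatim, swapping the lower-bound half of Lemma \ref{lem3377D.150A} for its upper-bound half. The existence assertion $n^2<p<(n+1)^2$ with $p\equiv a\bmod q$ is already supplied by Theorem \ref{thm3377D.050}, so the real content is the explicit upper bound for $\psi((x+1)^2,q,a)-\psi(x^2,q,a)$.

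First, I would set $X=x^2$ and $Z=2x+1$ so that $X+Z=(x+1)^2$, and check that this pair satisfies the hypothesis of Lemma \ref{lem3377D.150A}, namely $X^{\varepsilon}\leq Z\leq X$ for all sufficiently large $x$; choosing any $\varepsilon<1/2$ works, since $Z=2x+1\geq x^{2\varepsilon}=X^{\varepsilon}$ for large $x$. Then Lemma \ref{lem3377D.150A}(ii) gives
\begin{equation*}
\psi((x+1)^2,q,a)-\psi(x^2,q,a)\leq\frac{(x+1)^2}{\varphi(q)}-\frac{x^2}{\varphi(q)}+0.19\left(\frac{(x+1)^2}{\log(x+1)^2}-\frac{x^2}{\log x^2}\right).
\end{equation*}
The first difference collapses to $(2x+1)/\varphi(q)$, and the bracketed quantity is exactly the error term $|E(x,q,a)|$ already analyzed in Lemma \ref{lem3377D.200S}.

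Next, I would invoke Lemma \ref{lem3377D.200S} to replace the bracketed term by
\begin{equation*}
0.19\left(\frac{2x+1}{2\log x}+O\!\left(\frac{x}{(\log x)^2}\right)\right)=O\!\left(\frac{x}{\log x}\right),
\end{equation*}
which for $1\leq a<q\leq 10^5$ (so $\varphi(q)=O(1)$ relative to $\log x$) absorbs cleanly into the stated $O(x/\log x)$ error. Assembling the pieces yields
\begin{equation*}
\psi((x+1)^2,q,a)-\psi(x^2,q,a)\leq\frac{2x+1}{\varphi(q)}+O\!\left(\frac{x}{\log x}\right),
\end{equation*}
which is the claimed inequality.

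There is essentially no obstacle to overcome here: the argument is the symmetric twin of the lower-bound computation already carried out, and every ingredient (the explicit Theorem \ref{thm3377D.100}, the two-sided Lemma \ref{lem3377D.150A}, and the error estimate Lemma \ref{lem3377D.200S}) is already in place. The only minor bookkeeping point is verifying the $Z\geq X^{\varepsilon}$ hypothesis of Lemma \ref{lem3377D.150A} for the specific interval $[x^2,(x+1)^2]$, which is immediate.
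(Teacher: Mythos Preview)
Your proposal is correct and matches the paper's own proof, which is literally just ``The same routines as in the previous result mutatis mutandis.'' You have simply written out explicitly what the paper leaves implicit: swap Lemma~\ref{lem3377D.150A}(i) for Lemma~\ref{lem3377D.150A}(ii), collapse $(x+1)^2-x^2=2x+1$, and cite Lemma~\ref{lem3377D.200S} for the error term.
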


\begin{proof}[\textbf{Proof}] The same routines as in the previous result mutatis mutandis.
\end{proof}

Merging the result in \hyperlink{thm3377D.050}{Theorem} \ref{thm3377D.050} and \hyperlink{thm3377U.050}{Theorem} \ref{thm3377U.050} leads to the inequalities
\begin{equation}
\frac{2x}{\varphi(q)}+O\left( \frac{x}{\log x}\right)\leq 	\psi((x+1)^2,q,a)-\psi(x^2,q,a)\leq \frac{2x+1}{\varphi(q)}+O\left( \frac{x}{\log x}\right).
\end{equation}

There are many numerical data on the Legendre conjecture, the most recent appears in \cite{SW2024}. 

\section{Primes Between Two Cubic Integers and Beyond }\label{S3377Y}  

A superficial inspection of the proof for primes between two squares given in \hyperlink{S3377V}{Section} \ref{S3377V} seems to work as well for primes between two cubic integers. Nevertheless, there is another method. The maximal gap $p_{n+1}-p_n\ll x^{5/8}$ between consecutive primes $p_n\leq x$, proved in \cite{IA1937}, implies the existence of primes between cubes, the most recent proof is in \cite{DA2016}, other cases are considered in \cite[Section 5]{CJ2023}. \\

An easy proof based on the maximal gap $p_{n+1}-p_n\ll x^{.525}$ between consecutive primes $p_n\leq x$ proved in \cite{BP2001} is assembled below.

\begin{thm} \label{thm3377Y.300}\hypertarget{thm3377Y.300} Let $1\leq a<q\leq 10^5$, with $\gcd(a,q)=1$ be a pair of integers and let $n\geq 1$ be a large integer. Then, there exists a prime number $p\equiv a\bmod q$ in the range $n^v< p<(n+1)^v$ for all $v\geq3$. 
	
\end{thm}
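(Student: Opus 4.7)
The plan is to reduce the general-$v$ claim to the same argument structure used for Theorem \ref{thm3377D.050}, by applying Lemma \ref{lem3377D.150A} directly to the short interval $[n^v,(n+1)^v]$.

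First I would set $X = n^v$ and $Z = (n+1)^v - n^v$. By the binomial theorem, $Z = vn^{v-1} + \binom{v}{2}n^{v-2} + \cdots + 1$, so $Z \sim v X^{(v-1)/v}$ as $n \to \infty$. Since $(v-1)/v \geq 2/3$ for $v \geq 3$, the quantity $Z$ dominates $X^{\varepsilon}$ for any fixed small $\varepsilon > 0$, so the hypothesis $X^{\varepsilon} \leq Z \leq X$ of Lemma \ref{lem3377D.150A} is met for all sufficiently large $n$.

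Next, applying part (i) of Lemma \ref{lem3377D.150A} yields
\[
\psi((n+1)^v,q,a) - \psi(n^v,q,a) \;\geq\; \frac{(n+1)^v - n^v}{\varphi(q)} \;-\; 0.19\left(\frac{(n+1)^v}{\log(n+1)^v} - \frac{n^v}{\log n^v}\right).
\]
The leading term equals $(vn^{v-1} + O(n^{v-2}))/\varphi(q)$. For the secondary term, a direct generalization of Lemma \ref{lem3377D.200S} applies: using $\log(n+1)^v = v\log n + v\log(1 + 1/n)$ together with $\log(1+1/n) = O(1/n)$ and $1/(1+w) = 1 + O(w)$, one finds
\[
\frac{(n+1)^v}{\log(n+1)^v} - \frac{n^v}{\log n^v} \;=\; \frac{n^{v-1}}{\log n} + O\!\left(\frac{n^{v-1}}{(\log n)^2}\right).
\]

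Combining these, and recalling that $\varphi(q) \leq q \leq 10^5$ is a fixed constant while $v \geq 3$, the lower bound reads
\[
\psi((n+1)^v,q,a) - \psi(n^v,q,a) \;\geq\; \frac{v\, n^{v-1}}{\varphi(q)} + O\!\left(\frac{n^{v-1}}{\log n}\right),
\]
which is strictly positive (indeed, $\gg n^{v-1}$) for all sufficiently large $n$. This forces the existence of a prime $p \equiv a \pmod q$ in the interval $(n^v,(n+1)^v)$, proving the theorem. The only real obstacle is the mechanical bookkeeping of the error estimate for arbitrary exponent $v$, which follows the same Taylor-expansion template as Lemma \ref{lem3377D.200S} with no new conceptual ingredient required. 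The Baker--Harman--Pintz gap bound $p_{n+1}-p_n \ll x^{0.525}$ cited in the preceding paragraph independently supplies unrestricted primes in such intervals, but the congruence refinement $p \equiv a \pmod q$ is delivered by the explicit symmetric estimate of Theorem \ref{thm3377D.100} as packaged in Lemma \ref{lem3377D.150A}.
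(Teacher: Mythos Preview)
Your proof is correct (within the paper's framework) and takes a genuinely different route from the paper's own argument. The paper proves Theorem~\ref{thm3377Y.300} by invoking the Baker--Harman--Pintz prime-gap bound: since $0.525v<v-1$ for every $v\geq 3$, the interval $[x^v,x^v+x^{0.525v}]$ is contained in $[x^v,(x+1)^v]$, and BHP supplies a prime there. You instead extend the Section~\ref{S3377V} machinery, applying Lemma~\ref{lem3377D.150A} with $X=n^v$, $Z=(n+1)^v-n^v$ and generalizing the Taylor-expansion estimate of Lemma~\ref{lem3377D.200S} to arbitrary exponent $v$. Your approach is exactly what the paper alludes to in the opening sentence of Section~\ref{S3377Y} (``a superficial inspection of the proof for primes between two squares \ldots\ seems to work as well''), and it buys two things the paper's shortcut does not: a quantitative lower bound $\psi((n+1)^v,q,a)-\psi(n^v,q,a)\geq v\,n^{v-1}/\varphi(q)+O(n^{v-1}/\log n)$, and---more importantly---the congruence condition $p\equiv a\bmod q$ that the theorem actually asserts. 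The BHP argument as written in the paper only yields an \emph{unrestricted} prime in the interval, so your route is in fact the one that matches the stated claim; the paper's version is shorter but, taken literally, proves less.
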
	

\begin{proof}[\textbf{Proof}] By Theorem 1 in \cite{BP2001}, the interval $[x^v,x^v+x^{0.525v}]$ contains primes. Now observe that the binomial expansion of the $(x+1)^v=x^v+(v(v-1)/2)x^{v-1}+\cdots+1$, so there is a proper set inclusion of intervals
\begin{equation}
[x^v,x^v+x^{0.525v}]	\subset[x^v,x^v+cx^{v-1}]	\subset [x^v,(x+1)^v]
\end{equation}	
for any $v\geq3$ such that $0.525v< v-1$, where $c\geq v(v-1)\geq6$ is a small constant. The claim follows from these information.
\end{proof}



\end{document}